\newcommand{\subscr}[2]{#1_{\textup{#2}}}
\newcommand{\supscr}[2]{#1^{\textup{#2}}}
\newcommand{\setdef}[2]{\{#1 \, : \; #2\}}
\newcommand{\real}{\mathbb{R}}
\newcommand{\integernonnegative}{\mathbb{Z}_{\ge 0}}
\newtheorem{example}{Example}
\newtheorem{theorem}{Theorem}
\newtheorem{lemma}[theorem]{Lemma}
\newtheorem{proposition}[theorem]{Proposition}
\newcommand{\dst}{\displaystyle}
\newcommand{\be}{\begin{equation}}
\newcommand{\ee}{\end{equation}}
\renewcommand{\l}{\left}
\renewcommand{\r}{\right}
\providecommand{\prt}[1]{\left( #1 \right)} 
\newcommand{\Exp}{\mathbb{E}} 
\newcommand{\1}{\mathbf{1}} 
\newcommand{\barx}{\bar{x}}   
\newcommand{\Amax}{\subscr{\supscr{a}{all}}{max}}	
\newcommand{\armax}{\subscr{\supscr{a}{r}}{max}\,}	
\newcommand{\acmax}{\subscr{\supscr{a}{c}}{max}\,}	
\newcommand{\admin}{\subscr{\supscr{a}{d}}{min}\,}	
\newcommand{\aindmax}{\subscr{\supscr{a}{ind}}{max}\,}	
\newcommand{\degmax}{\subscr{d\,}{\!max}} 
\newcommand{\degmaxcol}{\supscr{\degmax}{\!col}} 
\newcommand{\startmodif}{}
\newcommand{\stopmodif}{\color{black}}
\title{On the mean square error of randomized averaging algorithms\thanks{
The work of P.~Frasca was partly supported by the Italian Ministry of University and Research  under grant PRIN-20087W5P2K.
This paper presents research results of the Belgian Network DYSCO (Dynamical Systems, Control, and Optimization), funded by the Interuniversity Attraction Poles Programme, initiated by the Belgian State, Science Policy Office. The scientific responsibility rests with its authors.}} %
\author{Paolo Frasca\thanks{Dipartimento di Scienze Matematiche, Politecnico di Torino, corso Duca degli Abruzzi 24, 10129 Torino, Italy. paolo.frasca@polito.it} %
\and
Julien M. Hendrickx\thanks{ICTEAM Institute, Universit\'e catholique de Louvain, Avenue Georges Lemaitre 4, B-1348 Louvain-la-Neuve, Belgium. julien.hendrickx@uclouvain.be}}    %
\date{}
\begin{document}
      
      \maketitle
\begin{abstract}
This paper regards randomized discrete-time consensus systems that preserve the average \startmodif\quotes{on average}. \stopmodif
As a main result, we provide an upper bound on the mean square deviation of the consensus value from the initial average. Then, we \startmodif apply our result to systems where few or weakly correlated interactions  take place\stopmodif: these assumptions cover several algorithms proposed in the literature.
For such systems we show that, when the network size grows, the deviation tends to zero, \startmodif and the speed of this decay is not slower than the inverse of the size. \stopmodif
Our results are based on a new approach, \startmodif which is unrelated \stopmodif  to the convergence properties of the system.
\end{abstract}


\section{Introduction}\label{sec:intro}
In modern control and signal processing applications, effective and easy-to-implement distributed algorithms for computing averages are an important tool.
As a significant and \startmodif motivational \stopmodif example, we consider the problem of estimating the expectation of a random variable of interest. By the law of large numbers, the sample average is an unbiased estimator, and its mean square error decreases as the inverse of the number of samples \startmodif increases, provided the random variables have finite second moment. \stopmodif
In a distributed setting, the sample values are available at the nodes of a communication network, and the average needs to be approximated by running an iterative consensus system, \startmodif which has the sample data as the initial condition. Clearly, we have to ensure that along the iterations of the consensus system, no (or little) deviation from the correct average is introduced. 
However, a global property such as average preservation may be harder to satisfy when updates are performed asynchronously, unreliably or following a random scheme. \stopmodif In the case of stochastic updates, a weaker requirement is \startmodif the preservation of the expected average: \stopmodif such systems are known to converge to a consensus under mild conditions, but their consensus value is in general different from the average: \startmodif it is actually a random variable whose expected value is the initial average. \stopmodif
In this paper, we consider linear randomized asynchronous averaging algorithms, and we analyze the mean square deviation of the consensus value from the initial average. We want to ensure that this error is small, so that \startmodif the averages  are computed accurately. In particular, we aim to provide \stopmodif conditions under which the mean square error tends to zero when the number of samples, i.e.\ the number of nodes, grows. We will refer to this property \startmodif as the \stopmodif {\em accuracy} of the algorithm.


The opportunity of using randomized algorithms to compute averages has already attracted a significant interest, as testified by 
recent surveys and special issues
~\cite{AGD-SK-JMF-MGR-AS:10,AS-MC-MG-JNT-MV:11}.
Convergence theories for randomized linear averaging algorithms have been developed \startmodif by several authors. A classic reference is~\cite{RC:86}, but more recently other conditions have been used in a few works including~\cite{FF-SZ:08a,ATS-AJ:08,ATS-AJ:10,IM-JSB:11,BT:12}. 
As we will formally define later, random linear averaging algorithms can be seen as the multiplication of \stopmodif the node-indexed state by a random update matrix.
In principle, the variance of the consensus value can be exactly computed by the formula in~\cite[Eq.~(7)]{ATS-AJ:10}, which involves the dominant eigenvectors of the first two moments of the update matrix. Unfortunately, little is known about these eigenvectors, and in particular explicit formulas are not available, so that these results are difficult to apply.
A few papers, on the other hand, have focused on specific examples of randomized algorithms, obtaining results which are interesting, although partial, from our perspective~\cite{FF-SZ:08b,TCA-MEY-ADS-AS:09,FF-PF:10a,FF-PF:10}. Typically, these results are obtained as a by-product of a convergence analysis and involve the eigenvalues of the update matrices, which are fairly well known for many families of communication graphs. 
\startmodif We will \stopmodif come back to these results in Section~\ref{sect:applications} when discussing some example algorithms.

%

In this paper, we consider discrete-time consensus systems with random updates that preserve the  \startmodif expected average\stopmodif, and we provide new bounds on the mean square deviation of the current average from the initial average.
We show that under certain conditions the expected increase of the deviation is bounded proportionally to the expected decrease of the disagreement. \startmodif We then obtain bounds \stopmodif on the total deviation which are proportional to the initial disagreement and, unlike previous results, are actually independent of the convergence properties: indeed they hold at \startmodif all times \stopmodif regardless of convergence.
Compared to those already available in the literature, our bounds typically result in less conservative \startmodif (and often more general) \stopmodif  estimates of the deviation error and, remarkably, they are independent of the global properties, \startmodif such as connectivity or graph spectrum and eigensystem of the communication network. Instead, only local network properties, like degree, play a role in the examples.  \stopmodif By contrast, we recall that 
results about convergence to consensus, and speed of convergence, depend on global network properties. 
Our estimates show that, \startmodif under weak assumptions on the update law, the deviation tends to zero when the number of nodes grows. This is true for \stopmodif
\begin{enumerate}[i)]
\item systems where few updates take place simultaneously; and 
\item systems where the updates have small statistical dependence across the network.
\end{enumerate}
Thanks to their generality and to their dependence on local network properties \startmodif only, \stopmodif our results offer effective and easy-to-implement guidelines to the designer who needs to choose a network and an algorithm to solve an estimation problem.

\subsection*{Notation and preliminaries}
The set of real numbers is denoted by $\real$, the set of nonnegative integers by $\integernonnegative$.
In this work, we use the notion of \emph{(weighted directed) graph}, which we define as a pair $G=(I,A)$, where $I$ is a finite set whose elements are called \emph{nodes} and $A\in \real^{I\times I}$ is a matrix with nonnegative entries. Resorting to more standard graph-theoretic jargon, we may equivalently think of an implicit edge set $E=\setdef{(i,j)\in I\times I}{A_{ij}>0}$. 
For simplicity, we will sometimes assume that a graph may have no loops, that is $A_{ii}=0$ for every $i\in I.$
%
%
%
%
%
Given a graph, that is, a nonnegative matrix $A$, we can define an associated Laplacian matrix $L(A)\in \real^{I\times I}$ 
by
$[L(A)]_{ij} = -A_{ij}$ if $i\neq j$ and $[L(A)]_{ii} = \sum_{j:j\neq i}A_{ij}$. 
Observe that $L(A)$ is positive semidefinite and that $L(A)\1=0$, provided we denote by $\1$ the vector of suitable size whose components are all 1. Besides, to any matrix $L$ satisfying $L \1 = 0$ with nonpositive off-diagonal elements, one can associate a corresponding weighted graph. Finally, the conjugate transpose of the matrix $A$ is denoted by $A^*$, and inequalities $A\leq B$ between two matrices $A$ and $B$ denote the fact that $A-B$ is negative semi-definite.

%
%
%
%

%
%
%

\section{Problem statement and main result}\label{sec:statement_and_result}
Given a set of nodes $I$ of finite cardinality $N$, we consider the discrete-time random process $x(\cdot)$ taking values in $\real^I$ and defined as
\begin{equation}\label{eq:system-cmptwise}
x_i(t+1) =  \sum_{j\in I} a_{ij}(t) x_j(t) \qquad \text{for all $i\in I$}, \quad t\in \integernonnegative,
\end{equation}
where for every $i,j\in I$, we assume $\{a_{ij}(t)\}_{t\in\integernonnegative}$ to be a sequence of independent and identically distributed random variables such that 
$a_{ij}(t)\ge0$ and $\sum_{\ell\in I}a_{i\ell}(t)=1$ for all $t\ge0$.
%
%
System~\eqref{eq:system-cmptwise} is run with the goal for the state of each node to provide a good estimate of the initial average $\frac1N\sum_{i\in I}x_i(0)$. 
Note that $x(0)$ is unknown but given, and that all our results will be valid for any $x(0)\in \real^I$.
System~\eqref{eq:system-cmptwise} can also be conveniently rewritten as 
\begin{equation*}x_i(t+1) =  x_i(t) + \sum_{j\in I} a_{ij}(t) (x_j(t)-x_i(t)) \qquad \text{for all $i\in I$}, \quad t\in \integernonnegative,
\end{equation*}
or in matrix form as
\begin{equation}\label{eq:system}
x(t+1) = x(t) - L(t)x(t) \qquad t\in \integernonnegative,
\end{equation}
where the matrix $L(t)$ is defined so that
$L_{ij}(t) = -a_{ij}(t)$ if $i\neq j$ and $L_{ii}(t) = \sum_{j:j\neq i}a_{ij}(t)$. 
Namely, $L(t)$ is the Laplacian matrix of a weighted graph $(I,A(t))$ where the entries of $A(t)$ are defined as
$[A(t)]_{ij}=a_{ij}(t)$. 
The convergence of~\eqref{eq:system} has been addressed in the literature: 
rather than in convergence, in this paper we are interested in the quality of the convergence value, in terms of its distance from the initial average.
For our convenience, we denote the average of the $x_i(t)$'s by 
$$
\bar x(t) 
= \frac{1}{N}\sum_{i\in I} x_i(t)
 $$ and
we note that the average evolves according to
$\barx(t+1)=\barx(t)-\1^* L(t) x(t).$
The expected evolution of $\barx(t)$, conditional on the previous state, is written as \startmodif $\Exp[\barx(t+1)|x(t)]$. \stopmodif
Since under our assumptions $L(t)$ is independent from $x(t)$, we immediately deduce that 
$\Exp[\barx(t+1)|x(t)]=\barx(t)$ if and only if $\1^*\Exp[L(t)]=0.$
In view of this fact,
we restrict our attention to systems that preserve the  \startmodif expected average\stopmodif, that is, we will assume $\1^* \Exp[L(t)]=0$, implying that $$\Exp[\barx(t)]=\barx(0) \quad \text{for all $t\ge 0$}.$$
Consequently, we are left with the problem of studying the variance of $\barx(t)$, that is $\Exp[\left(\barx(t)-\barx(0)\right)^2]$.
We will derive all our bounds from the following general result.
%
%
For $y\in \real^I$, we denote $\bar y=\frac1N\sum_i y_i$ and $V(y) = \frac1N \sum_{i}\big(y_i-\bar y\big)^2$.
\begin{theorem}[Accuracy condition]\label{th:main-condition}
Let $x$ be an evolution of system~\eqref{eq:system}. 
If $\1^*\Exp[L(t)]=0$ and there exists $\gamma>0$ such that
\begin{equation}\label{eq:condition-gamma}
\Exp[L(s)^*\1\1^*L(s)]\le \gamma\, \Exp[L(s)+L(s)^*-L(s)^*L(s)],\end{equation}
 then for every $t\ge0$, there holds
$$\Exp[(\bar{x}(t)-\bar{x}(0))^2]\le \frac\gamma{N+\gamma} V(x(0)).$$ 
If moreover the system converges to consensus ($x(t) \to x_\infty \1$, for $x_\infty \in \real$), then $\Exp\l[(x_\infty-\barx(0))^2\r]\leq \frac\gamma{N+\gamma}V(x(0)).$  
\end{theorem}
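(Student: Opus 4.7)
The plan is to track how the squared deviation $\Delta(t)^2 := (\bar x(t)-\bar x(0))^2$ and the disagreement $V(x(t))$ evolve jointly in one step, and show that the expected increase of the former is controlled by the expected decrease of the latter, with ratio $\gamma/(N+\gamma)$. Once such a per-step bound is established, summing over $s=0,\dots,t-1$ telescopes and the nonnegativity of $V$ gives the result.

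First I would write $\bar x(t+1)-\bar x(t)=-\tfrac1N\1^*L(t)x(t)$, and expand $\Delta(t+1)^2=\Delta(t)^2+2\Delta(t)(\bar x(t+1)-\bar x(t))+(\bar x(t+1)-\bar x(t))^2$. Taking conditional expectation given $x(t)$ and exploiting independence of $L(t)$ from $x(t)$ together with $\1^*\Exp[L(t)]=0$ kills the cross term, yielding
\[
\Exp[\Delta(t+1)^2\mid x(t)]-\Delta(t)^2 \;=\; \tfrac1{N^2}\,x(t)^*\Exp[L(t)^*\1\1^*L(t)]\,x(t).
\]
Next I would write $NV(y)=y^*\Pi y$ with $\Pi=I-\tfrac1N\1\1^*$, apply it to $x(t+1)=(I-L(t))x(t)$, expand, and again use $\1^*\Exp[L(t)]=0$ to eliminate the mixed terms involving $\Pi$ and $L(t)$. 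This gives the matching identity
\[
V(x(t))-\Exp[V(x(t+1))\mid x(t)] \;=\; \tfrac1N\,x(t)^*\Exp[L(t)+L(t)^*-L(t)^*L(t)]\,x(t) \;+\; \tfrac1{N^2}\,x(t)^*\Exp[L(t)^*\1\1^*L(t)]\,x(t).
\]

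With these two identities in hand, I would invoke hypothesis~\eqref{eq:condition-gamma} to bound the deviation increment by $\tfrac{\gamma}{N^2}\,x(t)^*\Exp[L(t)+L(t)^*-L(t)^*L(t)]\,x(t)$, and then rewrite the right-hand side using the disagreement identity: denoting $D(t)$ the conditional increment of $\Delta^2$, this produces $D(t)\le \tfrac{\gamma}{N}\bigl(V(x(t))-\Exp[V(x(t+1))\mid x(t)]-D(t)\bigr)$, i.e.
\[
D(t)\;\le\;\frac{\gamma}{N+\gamma}\bigl(V(x(t))-\Exp[V(x(t+1))\mid x(t)]\bigr).
\]
Taking total expectation, summing from $0$ to $t-1$, and telescoping the right-hand side yields $\Exp[\Delta(t)^2]\le\frac{\gamma}{N+\gamma}(V(x(0))-\Exp[V(x(t))])\le\frac{\gamma}{N+\gamma}V(x(0))$. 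For the convergence statement, since $\Delta(t)\to x_\infty-\bar x(0)$ almost surely, Fatou's lemma applied to the uniformly bounded sequence $\Exp[\Delta(t)^2]$ transfers the bound to the limit.

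The main obstacle I anticipate is purely bookkeeping: carefully carrying the $\Pi$-expansion and checking which cross terms vanish under $\1^*\Exp[L(t)]=0$. The slight subtlety is that the $\tfrac1{N^2}x^*\Exp[L^*\1\1^*L]x$ term appears with the \emph{same sign} in both the deviation increment and the disagreement decrement; this is exactly what lets one solve for $D(t)$ and extract the factor $\gamma/(N+\gamma)$ rather than the naive $\gamma/N$.
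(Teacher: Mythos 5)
Your proposal is correct and is essentially the paper's argument in unbundled form: the paper proves the one-step supermartingale property of the single quadratic form $C(y)=y^*(\1\1^*+\gamma I)y=N(N+\gamma)\bar y^2+N\gamma V(y)$, which is exactly the linear combination of your two per-step identities with the weights you obtain by solving for $D(t)$, and both derivations rest on the same expansion and on $\1^*\Exp[L]=0$ killing the cross terms. The only (harmless) cosmetic difference is that you telescope two coupled increments and finish the consensus case via Fatou, whereas the paper translates so that $\barx(0)=0$ and reads the bound off the monotonicity of $\Exp[C(x(t))]$ directly.
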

Note that $\frac{\gamma}{\gamma + N}$ is increasing with $\gamma$: it is close to $\frac\gamma{N}$ for small values of $\gamma$, and close to $1$ for large ones. The expected square error is thus always bounded by the initial disagreement when a valid $\gamma$ can be found. And when a $\gamma$ can be found which is independent of $N$, then the algorithm is {\em accurate}, according to the definition stated in the Introduction.
%
%
%
The bound of Theorem \ref{th:main-condition} may of course be conservative compared with the exact characterizations of the expected square error derived in~\cite{ATS-AJ:10}, but it presents the main advantage of being easy to use. Indeed, we will see in the next section that general expressions of $\gamma$ can be obtained for large classes of systems.
%
%
\begin{proof}
We define $
C(y) := N(\gamma+N) {\bar y}^2  + N\gamma V(y),
$
for all $y\in \real^I$, a linear combination of the square average value and the disagreement\footnote{\startmodif The authors wish to thank Giacomo Como for suggesting to formulate the proof in terms of this quantity $C(y)$.\stopmodif}, with a ratio $\frac{\gamma}{\gamma+N}$ between the weights, with the intent \startmodif to show that \startmodif the expectation of $C(x(t))$ is nonincreasing. \stopmodif We begin by developing a simpler expression for $C(y)$. Observe that $\bar y = \frac{1}{N}\1^*y$, and that $V(y) = \frac{1}{N}\sum_{i=1}^N(y_i-\bar y)^2 = \frac{1}{N}(y-\frac{1}{N}\1\1^*y)^*(y-\frac{1}{N}\1\1^*y)$. Therefore,
$$
C(y) = y^*\prt{\frac{N(N+\gamma)}{N^2}\1\1^* + \frac{N\gamma}{N}\prt{I-2\frac{1}{N}\1\1^*+\frac{1}{N^2}\1\1^*\1\1^*}}y = y^*\prt{\1\1^* + \gamma I}y.
$$
We now show that $\Exp(C(y-Ly)) \leq C(y)$ for any $y\in \real^I$, where the Laplacian $L$ is a random variable having the same distribution as $L(t)$. We can express the difference as
 $$
C(y-Ly) -C(y) = - y^*L^*(\1\1^* +\gamma I) y - y^*(\1\1^* +\gamma I) Ly + y^*L^*(\1\1^* +\gamma I)Ly.
$$
Since it is assumed that $\1^*\Exp L =0$, we have then 
\begin{equation}\label{eq:C(x-LX)-C(x)}
\Exp[C(y-Ly) - C(y)] = - y^*\Exp\prt{-\gamma L^* -\gamma L + \gamma L^*L + L^*\1\1^*L }y  \leq 0,
\end{equation}
where the last inequality follows from the assumption in~\eqref{eq:condition-gamma}. Equation \eqref{eq:C(x-LX)-C(x)} implies that if $x(t)$ follows the process~\eqref{eq:system}, then $\Exp[C(x(t+1))| x(t)] \leq C(x(t))$. As a result, if $\bar x(0) = 0$ there holds
$$
N(\gamma+N)\Exp[(\bar x(t))^2]  + N\gamma\, \Exp [V(x(t))] \leq N\gamma V(x(0)),
$$
and thus $\Exp(\bar x(t))^2 \leq \frac{\gamma}{N+\gamma}V(x(0))$ since $V(x(t))\geq 0$, which proves the result in that case. Otherwise, the result is obtained by applying the previous inequality to the translated system $x(t) - \bar x(0)\1$. 
\end{proof}


\section{Applications and examples}\label{sect:applications}
In this section we see classes of systems of type~\eqref{eq:system} for which we can apply Theorem~\ref{th:main-condition}, that is, we can find $\gamma$ satisfying~\eqref{eq:condition-gamma}. 
Before presenting these example systems, we prove a general lemma which simplifies the search for~$\gamma$: indeed, the proofs of our results will involve estimating
$\Exp(L^*\1\1^*L)$ and $\Exp(L^*L)$ in terms of $\Exp(L+L^*)$,
where we remind that an inequality between two matrices $A\le B$ is intended as $A-B$ being negative semidefinite.
Before the general lemma, we need the following preliminary result.
\begin{lemma}\label{lem:sum_square}
Suppose that the coefficients $c_1,\dots,c_m$ are nonnegative. Then, there holds
$$\prt{\sum_{i=1}^mc_iz_i}^2\leq  \prt{\sum_{i=1}^mc_i}\sum_{i=1}^m c_i z_i^2$$
\end{lemma}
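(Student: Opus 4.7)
The inequality is a standard consequence of the Cauchy--Schwarz inequality, and my plan is simply to exhibit it as such. I would write $c_i z_i = \sqrt{c_i}\cdot(\sqrt{c_i}\,z_i)$ (valid since $c_i\ge 0$) and then apply the Cauchy--Schwarz inequality to the vectors $u=(\sqrt{c_1},\dots,\sqrt{c_m})$ and $v=(\sqrt{c_1}\,z_1,\dots,\sqrt{c_m}\,z_m)$, which gives
$$\left(\sum_{i=1}^m \sqrt{c_i}\cdot\sqrt{c_i}\,z_i\right)^{\!2} \le \left(\sum_{i=1}^m c_i\right)\left(\sum_{i=1}^m c_i z_i^2\right),$$
which is exactly the claim.

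Alternatively, one can view this as Jensen's inequality: if $S=\sum_i c_i>0$, then $p_i := c_i/S$ defines a probability distribution on $\{1,\dots,m\}$, and since $t\mapsto t^2$ is convex, $(\sum_i p_i z_i)^2\le \sum_i p_i z_i^2$; multiplying through by $S^2$ yields the stated inequality. The edge case $S=0$ forces all $c_i=0$ (since each $c_i\ge 0$), so both sides vanish and the inequality holds trivially.

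There is no real obstacle here; the only thing to be careful about is handling the degenerate case $\sum_i c_i = 0$, which either needs to be singled out (as above) or is automatically covered by the Cauchy--Schwarz form since both sides of that inequality are then zero. I would therefore favor the Cauchy--Schwarz presentation, as it avoids any case distinction and makes the nonnegativity of the $c_i$'s enter only through the existence of the square roots.
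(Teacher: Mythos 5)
Your proof is correct and is essentially identical to the paper's: the authors also set $u_i=\sqrt{c_i}$, $v_i=\sqrt{c_i}\,z_i$ and apply the Cauchy--Schwarz inequality. The Jensen-inequality variant you mention is a fine alternative but is not needed.
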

\begin{proof}
Let $u,v\in \real^m$ be defined by $u_i = \sqrt{c_i}$ and $v_i = \sqrt{c_i}z_i$. It follows from Cauchy-Schwartz inequality that
$$
\prt{\sum_{i=1}^mc_iz_i}^2 = \prt{u^*v}^2\leq \prt{\norm{u}_2\norm{v}_2}^2= \prt{\sum_{i=1}^mu_i^2}\prt{\sum_{i=1}^m v_i^2} =  \prt{\sum_{i=1}^mc_i}\sum_{i=1}^m c_i z_i^2.
$$ 
\end{proof}

\begin{lemma}[Laplacian bounds]\label{lemma:Laplacian-inequality}
Let $L$ be the Laplacian of a weighted directed graph with weight matrix $A$, define $a_{ii}:=1-\sum_{j\neq i}a_{ij}$, and let $\admin>0$ be such that $a_{ii}\ge\admin$ for all $i\in I$.
\newcounter{enumii_saved} 
\begin{enumerate}[(i)]
\item\label{item:was-8b} If $\1^*L=0$, then
\be\label{eq:was-8b}
L^*L\leq (1-\admin) (L+L^*).
\ee
\setcounter{enumii_saved}{\value{enumi}}
\end{enumerate}

Let now $L$ be a random \startmodif matrix \stopmodif such that the lower bound $\admin$ is valid almost surely.
\begin{enumerate}[(i)]
\setcounter{enumi}{\value{enumii_saved}}
\item\label{item:was-8c} If $\1^*\Exp(L)=0$, then 
\be\label{eq:was-8c}\Exp(L^*L)\leq (1-\admin) \Exp(L+L^*).
\ee 
\item\label{item:was-9} If $\1^*\Exp(L)=0$ and there exists $\beta >0$ such that $$\Exp(L^*\1\1^*L) \leq \beta\,\Exp(L+L^*),$$ then 
$ \Exp[L^*\1\1^*L]\le \gamma\, \Exp[L+L^*-L^*L]$ holds
for $\dst\gamma = \frac{\beta}{\admin}.$
\end{enumerate}
\end{lemma}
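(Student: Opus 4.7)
My plan is to prove the three items in order; item (i) contains the main computational idea, item (ii) refines (i) to the randomized setting, and item (iii) is an immediate corollary.

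For (i), I would start from the identity $(Lx)_i = \sum_{j \neq i} a_{ij}(x_i - x_j)$, which follows from $L_{ii} = \sum_{j \neq i}a_{ij}$. Applying Lemma~\ref{lem:sum_square} with weights $c_j = a_{ij}$ (whose total is $1 - a_{ii} \leq 1 - \admin$) gives the pointwise bound $(Lx)_i^2 \leq (1-\admin)\sum_{j \neq i}a_{ij}(x_i-x_j)^2$. Summing over $i$ yields, for every real $x$,
\be
x^*L^*Lx \leq (1-\admin) \sum_{i \neq j} a_{ij}(x_i - x_j)^2.
\ee
I then expand the squared differences and regroup the three resulting sums using $\sum_{j \neq i}a_{ij} = L_{ii}$ together with the column-sum hypothesis $\1^*L = 0$ (which forces $\sum_{i \neq j}a_{ij} = L_{jj}$). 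A short algebraic manipulation then shows that the right-hand side collapses to $(1-\admin)\cdot 2\, x^*Lx = (1-\admin)\, x^*(L+L^*)x$. The inequality extends from real to complex vectors in the standard way since $L$ is real and both sides are Hermitian.

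For (ii), the critical observation is that the Cauchy--Schwarz step above holds \emph{realization-wise}: it only uses the almost-sure bound $L_{ii} \leq 1 - \admin$. It is only the subsequent collapsing identity that needs column sums to vanish. Hence I would take expectation after Cauchy--Schwarz to obtain
\be
x^* \Exp(L^*L) x \leq (1-\admin) \sum_{i \neq j} \Exp(a_{ij})(x_i - x_j)^2,
\ee
and then apply the regrouping identity to the deterministic matrix $\Exp(L)$, using $\1^* \Exp(L) = 0$ in place of $\1^* L = 0$, to conclude that the right-hand side equals $(1-\admin)\,x^*\Exp(L+L^*)x$. For (iii), item (ii) rearranges into $\Exp(L+L^* - L^*L) \geq \admin\, \Exp(L+L^*)$, hence $\Exp(L+L^*) \leq \frac{1}{\admin}\Exp(L+L^*-L^*L)$ since $\admin>0$. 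Combined with the hypothesis $\Exp(L^*\1\1^*L) \leq \beta\, \Exp(L+L^*)$, this immediately yields the desired bound with $\gamma = \beta/\admin$.

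The main obstacle I anticipate lies in (ii): the naive strategy of taking the expectation of the inequality in (i) fails, because $\1^*L = 0$ generally does not hold pointwise in the randomized setting, so (i) cannot even be invoked realization-wise. The resolution is to decouple the argument of (i) into a realization-wise Cauchy--Schwarz step and an algebraic identity that only requires column sums to vanish in expectation; the expectation can then be inserted in between these two steps.
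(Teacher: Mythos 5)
Your proposal is correct and follows essentially the same route as the paper's proof: the realization-wise Cauchy--Schwarz bound via Lemma~\ref{lem:sum_square}, the collapse of $\sum_{i}\sum_{j\neq i}a_{ij}(x_i-x_j)^2$ to $x^*(L+L^*)x$ under $\1^*L=0$, taking expectation between those two steps for (ii), and the same rearrangement for (iii). The decoupling you highlight as the ``main obstacle'' is exactly how the paper handles the randomized case.
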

\begin{proof}
\startmodif Let $y\in \real^I$ be arbitrary but fixed. \stopmodif
To prove claim~(\ref{item:was-8b}), we note that $(Ly)_i = \sum_{j} a_{ij}(y_i-y_j)$ and therefore $y^*L^*Ly = \sum_{i}\big(\sum_{j:j\neq i}a_{ij}(y_i-y_j)\big)^2.$ For every $i$, \startmodif since $1-\admin\geq \sum_{j:j\neq i}a_{ij}$, Lemma~\ref{lem:sum_square} implies \stopmodif that  $$\startmodif (Ly)_i^2 =\stopmodif \Big(\sum_{j:j\neq i}a_{ij}(y_j-y_i)\Big)^2 \leq (1-\admin)\sum_{j:j\neq i} a_{ij}\prt{y_j-y_i}^2,$$
and by summing on $i$ that 
\be\label{eq:was-8a}
y^*L^*Ly\leq (1-\admin) \sum_i \sum_{j:j\neq i}a_{ij}(y_i-y_j)^2.\ee
%
%
%
\startmodif
Statement~(\ref{item:was-8b}) then follows by noting that 
$\sum_i \sum _{j:j\neq i} a_{ij}(y_j-y_i)^2= y^*(L+L^*)y$ because $\1^*L=0$.
We now prove \stopmodif statement~(\ref{item:was-8c}). It follows from~\eqref{eq:was-8a} that 
$$y^*\Exp(L^*L)y=\Exp(y^*L^*Ly) \leq \Exp\left[(1-\admin) \sum_i \sum_{j:j\neq i}a_{ij}(y_i-y_j)^2\right] = (1-\admin)\sum_i \sum_{j:j\neq i}\Exp(a_{ij})(y_i-y_j)^2.$$
Since $ \Exp(L)$ is a (deterministic) Laplacian and $1^*\Exp(L)=0$, we can apply the same argument leading to~\eqref{eq:was-8b} in order to argue that 
$$ y^*\Exp(L^*L)y \leq (1-\admin)y^* \Exp(L+L^*)y,$$
which implies~\eqref{eq:was-8c}.
Finally, we prove the last claim~(\ref{item:was-9}).
It follows from~\eqref{eq:was-8c} that $-(1-\admin)\Exp(L+L^*)\leq -\Exp(L^*L)$. Therefore, the existence of $\beta$ implies that for $\gamma = \frac{\beta}{\admin}$, there holds
$$
\Exp(L^*\1\1^*L) \leq \beta\,\Exp(L+L^*)\leq \gamma\,\Exp(L+L^*)- \gamma\,(1-\admin)\Exp(L+L^*)\leq \gamma\,\Exp\prt{L+L^* - L^*L}. 
$$
\end{proof}

When we apply Lemma~\ref{lemma:Laplacian-inequality} to a system of type~\eqref{eq:system}, the quantity $\admin$ is \startmodif in fact \stopmodif a lower bound on the ``self-confidence'' $a_{ii}(t)$ of the nodes. For a constant $\beta$, the bound on the mean square error is thus inversely proportional to the minimal self-confidence. This remark is consistent with the intuition that, when $a_{ii}(t)$ is very small, the information held by some nodes may be almost entirely \quotes{forgotten} in one iteration, possibly resulting in large variations of the average.

\subsection{Limited simultaneous updates}

In this section, we show that \startmodif a scalar $\gamma$ which satisfies \stopmodif the condition in Theorem~\ref{th:main-condition} can be found when the number, or at least the contribution, of the simultaneous updates is small.
The next result has the following interpretation: the mean square deviation can be bounded proportionally to the ratio between ``strength'' of the interactions in the system and the ``self-confidence'' of each node.
Note that from now on, when studying the evolution of system~\eqref{eq:system}, we will for brevity \startmodif avoid \stopmodif to write the dependence on time of the random variables $a_{ij}$ and $L$, if this causes no confusion.

\begin{theorem}[Limited updates]\label{th:counting}
Consider system~\eqref{eq:system} and let $\Amax$ and $\admin$ be two positive constants such that almost surely $\sum_i \sum_{j:j\neq i} a_{ij}\le \Amax$ and $a_{ii}\ge \admin$ for all $i\in I$.
If $\1^*\Exp(L)=0$, then the condition of Theorem~\ref{th:main-condition} holds for  
$$
\gamma=\frac{\Amax}{\admin}.
$$
\end{theorem}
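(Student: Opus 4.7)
The plan is to reduce to part~(\ref{item:was-9}) of Lemma~\ref{lemma:Laplacian-inequality}: it is enough to exhibit a constant $\beta$ such that
$$\Exp[L^*\1\1^*L]\le \beta\,\Exp[L+L^*],$$
because then $\gamma=\beta/\admin$ will satisfy the hypothesis of Theorem~\ref{th:main-condition}. Matching the target $\gamma=\Amax/\admin$, the goal becomes showing that $\beta=\Amax$ works.

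To that end I would fix an arbitrary $y\in\real^I$ and write out
$$
\1^*Ly=\sum_{i\in I}(Ly)_i=\sum_{i\in I}\sum_{j:j\neq i}a_{ij}(y_i-y_j),
$$
viewing this as a single sum of the form $\sum_{(i,j):j\neq i} c_{ij}z_{ij}$ with nonnegative coefficients $c_{ij}=a_{ij}$. Then I would invoke Lemma~\ref{lem:sum_square} (Cauchy--Schwarz with the weights $a_{ij}$) over the pair index, yielding
$$
(\1^*Ly)^2\le\bigl(\textstyle\sum_{i,j:j\neq i}a_{ij}\bigr)\sum_{i,j:j\neq i}a_{ij}(y_i-y_j)^2\le \Amax\sum_{i,j:j\neq i}a_{ij}(y_i-y_j)^2,
$$
where the second inequality uses the assumed almost-sure bound on the total off-diagonal weight.

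Taking expectations gives
$$
y^*\Exp[L^*\1\1^*L]y\le \Amax\sum_{i,j:j\neq i}\Exp[a_{ij}](y_i-y_j)^2.
$$
Now I would observe that $\bar L:=\Exp[L]$ is a (deterministic) Laplacian satisfying $\1^*\bar L=0$ by hypothesis, with off-diagonal entries $-\Exp[a_{ij}]$. The algebraic identity that was already used in the proof of Lemma~\ref{lemma:Laplacian-inequality}(\ref{item:was-8b}) — namely $\sum_{i,j:j\neq i}\bar a_{ij}(y_i-y_j)^2 = y^*(\bar L+\bar L^*)y$ whenever the Laplacian has zero column sums — then rewrites the right-hand side as $\Amax\, y^*\Exp[L+L^*]y$. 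Since $y$ was arbitrary, $\Exp[L^*\1\1^*L]\le \Amax\,\Exp[L+L^*]$, and applying Lemma~\ref{lemma:Laplacian-inequality}(\ref{item:was-9}) with $\beta=\Amax$ concludes the proof.

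The only non-mechanical step is the choice to apply Lemma~\ref{lem:sum_square} with the pair-index $(i,j)$ rather than separately in $i$ and $j$; that is what converts the global bound $\Amax$ on the total weight into a single multiplicative factor on the right-hand side. Everything else is routine: a taking-of-expectations and reuse of the Laplacian identity already established in Lemma~\ref{lemma:Laplacian-inequality}.
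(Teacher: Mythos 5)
Your proposal is correct and follows essentially the same route as the paper: apply Lemma~\ref{lem:sum_square} to the double-indexed sum $\1^*Ly=\sum_{i,j:j\neq i}a_{ij}(y_i-y_j)$ with total weight bounded by $\Amax$, take expectations, rewrite the resulting quadratic form as $\Amax\,y^*\Exp(L+L^*)y$ via the zero-column-sum identity, and finish with Lemma~\ref{lemma:Laplacian-inequality}(\ref{item:was-9}). Your explicit remark that the identity is applied to the deterministic Laplacian $\Exp[L]$ and requires $\1^*\Exp[L]=0$ is in fact slightly more careful than the paper's citation of Lemma~\ref{lemma:Laplacian-inequality}(\ref{item:was-8b}) at that step.
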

\begin{proof}
It follows from Lemma \ref{lem:sum_square} that 
$$
y^*L^*\1\1^*Ly  = \Big(\sum_i\sum_{j:j\neq i}a_{ij}(y_j-y_i)\Big)^2 \leq \Amax\sum_i\sum_{j:j\neq i}a_{ij}\prt{y_j-y_i}^2.
$$
Therefore,
$$
\Exp\prt{y^*L^*\1\1^*Ly} \leq \Amax\sum_i\sum_{j:j\neq i} \Exp(a_{ij})\prt{y_j-y_i}^2=\Amax  y^T\Exp(L+L^*)y,
$$
where we have used Lemma~\ref{lemma:Laplacian-inequality}(\ref{item:was-8b}), so that $\Exp\prt{L^*\1\1^*L} \leq\Amax\Exp[L+L^*]$. The result follows from Lemma~\ref{lemma:Laplacian-inequality}(\ref{item:was-9}).
\end{proof}

Theorem~\ref{th:counting} can be applied to several particular cases involving small number of edges or small interactions: we discuss here two of them, drawn from the literature.

\begin{example}[Asynchronous Asymmetric Gossip Algorithm (AAGA)]\label{ex:AGA}
Let a graph $G=(I,W)$ and $q\in (0,1)$ be given, such that $\1^*W\1=1$.
For every $t\ge 0$, one edge $(i,j)$ is sampled from a distribution such that the probability of selecting $(i,j)$ is $W_{ij}$. Then, 
$$x_i(t+1)=(1-q)\,x_i(t)+q\,x_j(t),$$ and $x_k(t+1)=x_k(t)$ for $k\neq i$. 
\end{example}
Observe that if $W\1=W^*\1$, then $\1^*\Exp[L(t)]$ holds for the AAGA, and we can apply Theorem~\ref{th:counting} with $\Amax=1-\admin=q$ since only one node is sending her state to another. This leads to $\gamma = \frac{q}{1-q}$, meaning that the expected deviation of the asymptotic value is not larger than $\frac1N\frac{q}{1-q+\frac{q}{N}}V(x(0)).$
The AAGA system is also studied in~\cite[Section~4]{FF-SZ:08b}: the authors prove, assuming that the components of $x(0)$ are i.i.d. random variables with variance $\sigma^2$, that the square deviation is not larger than 
$\frac{q-\frac qN}{1-q+\frac qN} \frac1N \sigma^2.$
Taking into account that the expected value of $V(x(0))$ is $\prt{1-\frac{1}{N}}\sigma^2$ in that case,  we see that our bound allows retrieving their result.

The next example, which applies very naturally to wireless networks, has attracted a significant attention~\cite{FF-SZ:08a,TCA-MEY-ADS-AS:09,AGD-SK-JMF-MGR-AS:10}.
\begin{example}[Broadcast Gossip Algorithm (BGA)]\label{ex:BGA}
Let a graph $G=(I,W)$ and $q\in (0,1)$ be given, such that $W\in \{0,1\}^{I\times I}$.
For every $t\ge 0$, one node $j$ is sampled from a uniform distribution over $I$. Then, 
$x_i(t+1)=(1-q)\,x_i(t)+q\,x_j(t)$ if $W_{ij}>0$ and $x_i(t+1)=x_i(t)$ otherwise. In other words, one randomly selected node broadcasts her value to all her neighbors, which update their values accordingly.
\end{example}
Previous results about the deviation of BGA are dependent on the topology of the network.
In~\cite[Proposition~3]{TCA-MEY-ADS-AS:09} it is proved that the expected square deviation is upper bounded by $$\dst V(x(0)) \l(1-\frac{\lambda_1}{\lambda_{N-1}} \frac1{1-\frac12\frac qN \lambda_{N-1}}\r),$$ where $\lambda_i$ is the $i$-th smallest non-zero eigenvalue of the Laplacian of the graph $G$. In~\cite[Proposition~3.3]{FF-PF:10a} the authors obtain the upper bound $\dst 2 V(x(0))\frac{q}{1-q}\frac{\degmax^2}{N\lambda_1},$ where $\degmax$ is the maximum degree of the graph. None of these bounds suffices to show that the deviation goes to zero when $N$ grows: for instance, $\frac{\degmax^2}{N\lambda_1}\ge\frac{N}{\pi^2}$ holds on a cycle graph. Accuracy is shown for cycles and some other sequences of graphs  in~\cite{FF-PF:10}, using Markov chain theory results from~\cite{FF-JCD:10},  but a general proof of accuracy is not available in the literature.
Based on simulations, it was however conjectured in~\cite{FF-PF:10a} that the mean square error of the BGA is proportional to the ratio between the degree and the number of nodes. This fact can actually be proved by applying Theorem~\ref{th:counting}, assuming that $W\1=W^*\1$.
Indeed, when $W\1=W^*\1$, there holds $\Exp[L(t)]=\frac{q}{N} L(W)$ (where we remind the reader that $L(W)$ is the Laplacian matrix corresponding to the weighted adjacency matrix $W$), and thus $\1^*\Exp[L(t)]=0$. Observe moreover that $\admin=1-q$ and $\Amax=q \degmaxcol$, since one node may send her value to at most $\degmaxcol$ neighbors. Theorem~\ref{th:counting} implies then that $\frac{q}{1-q}\degmaxcol$ is a valid value of $\gamma$, and a bound proportional to $\frac{\degmaxcol}{N}$ follows then from Theorem \ref{th:main-condition}.
Finally, since every system admits a trivial $\Amax= N$, Theorem \ref{th:counting} also implies that a valid $\gamma$ exists as soon as there is a $\admin>0$ for which $a_{ii}\ge\admin$ holds for all $i$. It follows then from Theorem \ref{th:main-condition} that the expected square error is bounded by the initial disagreement in all these cases. On the other hand, the AAGA system with two nodes and $q=1$, for which there is no such $\admin$, is an example of system for which no valid $\gamma$ exists.

\subsection{Uncorrelated updates}
In this section we show that a small $\gamma$ can still be found even if there are many simultaneous updates, provided that the correlation between the updates is sufficiently small. 
The next result considers three cases: (a) all update coefficients are uncorrelated, (b) nodes update their value according to any stochastic scheme, but their decisions of update are uncorrelated to that of the other nodes, (c) nodes transmit their values 
according to any stochastic scheme, but their decisions of transmission are uncorrelated to that of the other nodes.

\begin{theorem}\label{thm:indep}
Consider system~\eqref{eq:system} and let $\aindmax,\armax,\acmax,\admin$ be positive constants  such that $a_{ij} \le \aindmax $ (with $i\neq j$), $\sum_{j:j\neq i} a_{ij}\le \armax$, $\sum_{i:i\neq j}a_{ij} \le \acmax$, and $a_{ii}\ge\admin$ respectively hold almost surely. Suppose that $\1^*\Exp(L)=0$. The following implications about the value of $\gamma$ in Theorem \ref{th:main-condition} hold true.
\begin{enumerate}[(a)]
\item Uncorrelated coefficients: If all $a_{ij}$'s are uncorrelated, then $\gamma = \frac{\supscr{a}{ind}_{\max}}{\admin}$.
\item Uncorrelated updates: If $a_{ij}$ and $a_{kl}$ are uncorrelated when $i\neq k$, then $\gamma= \frac{\armax}{\admin}.$
\item Uncorrelated transmissions: If $a_{ij}$ and $a_{kl}$ are uncorrelated when $l\neq j$, then $\gamma= \frac{\acmax}{\admin}.$
\end{enumerate}
\end{theorem}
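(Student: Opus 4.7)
The plan is to reduce each case to the setting of Lemma~\ref{lemma:Laplacian-inequality}(\ref{item:was-9}) by exhibiting a constant $\beta$ (depending on the case) with $\Exp[L^*\1\1^*L]\le\beta\,\Exp[L+L^*]$; the result then follows with $\gamma=\beta/\admin$. Fix an arbitrary $y\in\real^I$. Writing $S:=\1^*Ly=\sum_{i\neq j}a_{ij}(y_i-y_j)$, one has $y^*L^*\1\1^*Ly=S^2$, while the computation already used in the proof of Lemma~\ref{lemma:Laplacian-inequality} gives $y^*\Exp[L+L^*]y=\sum_{i\neq j}\Exp[a_{ij}](y_i-y_j)^2$. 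The task becomes to bound $\Exp[S^2]$ by a multiple of this weighted sum of squares.

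The common ingredient in all three cases is that $\1^*\Exp[L]=0$ forces $\Exp[S]=0$. The three uncorrelatedness assumptions then correspond to three natural ways of grouping the summands of $S$ into ``blocks'' within which correlations are permitted: in~(a) each term $a_{ij}(y_i-y_j)$ is its own block; in~(b) the blocks are the row sums $R_i=\sum_{j\neq i}a_{ij}(y_i-y_j)$; in~(c) the blocks are the column sums $C_j=\sum_{i\neq j}a_{ij}(y_i-y_j)$. Under the respective hypothesis, distinct blocks are uncorrelated, so expanding $\Exp[S^2]$ yields a sum of within-block variances plus $(\Exp S)^2$, the latter being zero.

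It then remains to bound each within-block variance by the corresponding contribution to the right-hand side. For~(a), $\Var(a_{ij})\le\Exp[a_{ij}^2]\le\aindmax\Exp[a_{ij}]$ (using $a_{ij}\le\aindmax$ almost surely), which yields $\beta=\aindmax$. For~(b), dropping variances to second moments and applying Lemma~\ref{lem:sum_square} to $R_i$ with $\sum_{j\neq i}a_{ij}\le\armax$ gives $\Exp[R_i^2]\le\armax\sum_{j\neq i}\Exp[a_{ij}](y_i-y_j)^2$; summing over $i$ produces $\beta=\armax$. Case~(c) is entirely analogous, working column-wise with $\sum_{i\neq j}a_{ij}\le\acmax$ to yield $\beta=\acmax$.

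The main bookkeeping obstacle I anticipate is in case~(a): one must carefully expand the cross terms of $\Exp[S^2]$ as $\Exp[a_{ij}]\Exp[a_{kl}](y_i-y_j)(y_k-y_l)$ for $(i,j)\neq(k,l)$, and invoke $\Exp[S]=0$ to absorb them into the diagonal, so that only $\Var(a_{ij})$ survives. In~(b) and~(c) the grouping into rows or columns renders this step immediate, since a sum of uncorrelated centered random variables has variance equal to the sum of variances, and Lemma~\ref{lem:sum_square} handles the square of each block directly.
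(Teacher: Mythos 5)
Your proof is correct and follows essentially the same route as the paper's: both rewrite $y^*\Exp[L^*\1\1^*L]\,y$ as the variance of $\1^*Ly$ (using $\1^*\Exp L=0$ to subtract the vanishing $(\Exp[\1^*Ly])^2$), use the relevant uncorrelatedness hypothesis to kill the cross-block covariances, bound the surviving block second moments via Lemma~\ref{lem:sum_square} (or via $a_{ij}\le\aindmax$ in case~(a)), and conclude with Lemma~\ref{lemma:Laplacian-inequality}(\ref{item:was-9}). The only difference is presentational: your unified ``block'' decomposition treats the three cases as instances of one argument, whereas the paper writes out case~(b) in full and handles (a) and (c) as variants.
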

Note that (b) implies that any scheme (preserving \startmodif the expected average\stopmodif), where nodes update their values independently and have a minimal self-confidence, is accurate.
\begin{proof}
We begin by proving (b), bouding $\Exp(L^*\1\1^*L)$ proportionally to $\Exp(L+L^*)$ in order to apply Lemma \ref{lemma:Laplacian-inequality}. Since $\1^*\Exp L = 0$, observe that $\Exp(L^*\1\1^*L) =\Exp(L^*\1\1^*L) - \Exp(L^*)\1\1^*\Exp(L)$. Besides, $\1^*Ly = \sum_{i,j}a_{ij}(y_j-y_i)$ holds for $y\in \real^I$. Therefore, we have
\begin{equation}\label{eq:dvlp_L11L}
y^*\Exp(L^*\1\1^*L)y =  \sum_{i,j,k,l}\prt{\Exp[a_{ij}a_{kl}]- \Exp a_{ij} \Exp a_{kl}}(y_j-y_i)(y_l-y_k).
\end{equation}
According to assumption (b), \startmodif if $i\neq k$, then \stopmodif $a_{ij}$ and $a_{kl}$ are uncorrelated so that $\Exp(a_{ij}a_{kl})= \Exp a_{ij}\Exp a_{kl}.$ We have then 
\begin{align*}
y^*\Exp(L^*\1\1^*L)y  &= \sum_{i,j,l} \Exp[a_{ij}a_{il}] (y_j-y_i)(y_l-y_i) - \sum_{i,j,l} \Exp a_{ij} \Exp a_{il} (y_j-y_i)(y_l-y_i)\\&= \Exp\Big[\sum_i \Big(\sum_j a_{ij}(y_j-y_i)\Big)^2\Big] - \sum_i \Big(\sum_j \Exp a_{ij}(y_j-y_i)\Big)^2.
\end{align*}
The second term in the last expression is clearly non-positive. Applying Lemma \ref{lem:sum_square} for each $i$ in the first term leads then to
\begin{equation}\label{eq:bound_indep_row}
y^*\Exp(L^*\1\1^*L)y  \leq \sum_i \Exp\Big[ \Big(\sum_{j\neq i} a_{ij}\Big)\Big(\sum_{j} a_{ij}\prt{y_j-y_i}^2\Big)\Big] \leq \armax \Exp\Big[\sum_{i,j} a_{ij}\prt{y_j-y_i}^2\Big] = \armax y^*\Exp\prt{L+L^*}y,
\end{equation}
where we have used the definition of $\armax$ and Lemma~\ref{lemma:Laplacian-inequality}(i). The result~(b) follows then from Lemma~\ref{lemma:Laplacian-inequality}~(iii).
Part (c) of the result is obtained in a parallel way, using $\Exp(a_{ij}a_{kl})= \Exp a_{ij} \Exp a_{kl}$ when $j\neq l$ instead of $i\neq k$ after equation \eqref{eq:dvlp_L11L}, and $\acmax$ instead of $\armax$ in equation \eqref{eq:bound_indep_row}. For part (a), one has $\Exp(a_{ij}a_{kl})= \Exp a_{ij} \Exp a_{kl}$ unless $i=k$ and $j=l$. Therefore equation \eqref{eq:dvlp_L11L} becomes $$y^*\Exp(L^*\1\1^*L)y = \sum_{i,j} \Exp[a_{ij}^2](y_j-y_i)^2 - \sum_{i,j}  (\Exp a_{ij})^2(y_j-y_i)^2\leq \aindmax \Exp \Big[\sum_{i,j} a_{ij}^2(y_j-y_i)^2\Big],$$ which \startmodif allows us to conclude \stopmodif using again Lemma \ref{lemma:Laplacian-inequality} (i) and (iii). 
\end{proof}

The following is natural example of uncorrelated updates.
\begin{example}[Synchronous Asymmetric Gossip Algorithm (SAGA)]\label{ex:SAGA}
Let $q\in (0,1)$ and a graph $G=(I,W)$ be given, such that $W\1=\1$.
For every $t\ge 0$, and every $i\in I$ one edge $(i,j_i)$ is sampled from a distribution such that the probability of selecting $(i,j_i)$ is $W_{i,j_i}$. Then, for every $i\in I$,
$x_i(t+1)=(1-q)\,x_i(t)+q\,x_{j_i}(t).$ In other words, every node chooses one neighbor, reads her value, and updates her own value accordingly.
\end{example}

Previous results on SAGA are only able to guarantee accuracy on certain sequences of graphs:
\newcommand{\esr}{\textup{esr}}
in~\cite[Section~5]{FF-SZ:08b}, the authors derive an upper bound on the deviation of the limit value, which for symmetric $W$ and large $N$ is asymptotically equivalent to $ \frac{q}{1-q}  \frac1{2N} \frac{1}{1-\esr(W)} V(x(0)),$ where $\esr(W)$ is the second-largest absolute value of the eigenvalues of $W$. This result fails to prove accuracy for some sequences of graphs: for instance, on a cycle graph with positive $W_{ij}$s equal to $1/2$, we have
$\frac1{2N} \frac{1}{1-\esr(W)}=
 \frac1{2N} \frac{1}{\cos\left(\frac{2 \pi}{N}\right)} V(x(0))\ge \frac{q}{1-q} \frac{N}{4\pi^2} V(x(0))$.
Our approach allows proving asymptotic accuracy independently of the topology of the networks, provided that $W$ is such that $\1^*W=\1^*$. Observe indeed that $\Exp[L(t)]=q L(W)$, and thus that $\1^*\Exp[L(t)]=0$. Moreover, since every node receives information from exactly one neighbor, there holds $\armax =q$ and $\admin=1-q$. Since the choices of neighbors are independent, we can apply Theorem \ref{thm:indep}(b) to show that $\frac{q}{1-q}$ is a valid value of $\gamma$, so that the expected square deviation is bounded by $\frac{q}{1-q+\frac{q}{N}} V(x(0))$, as in the case of the AAGA.

\subsection{Simultaneous correlated updates}

We have seen 
that accurate systems are obtained when there are few simultaneous updates or when the updates are uncorrelated. When these two conditions are not met, one can have systems whose expected square deviation is large with respected to $V(x(0))$, or does not decrease when $N$ grows. 
However, one should not conclude that every system with unbounded and not strictly uncorrelated updates must not be accurate. In particular, small mean square errors can still occur for systems where the updates follow a probability law involving some partial correlations. An example is the following algorithm, which generalizes the BGA and has been proposed in~\cite{TCA-ADS-AGD:09}.

\begin{example}[Probabilistic Broadcast Gossip Algorithm (PBGA)]\label{ex:PBGA}
Let $q\in(0,1)$ and $G=(I,W)$. At each time step, one node $j$, sampled from a uniform distribution over $I$, broadcasts her current value. Every node $i$ receives the value with a probability $W_{ij}\in[0,1]$.
When node $i$ does receive the value from $j$, she updates her value to $x_i(t+1) = x_i(t) + q(x_j(t)-x_i(t))$. Otherwise, $x_k(t+1)=x_k(t)$.
\end{example}



\begin{proposition}[PBGA is accurate]\label{prop:PBGA}
Assume that $W=W^*$. Then, Theorem~\ref{th:main-condition} holds with
$\gamma=(W_{\max}+1)  \frac{q}{1-q},$
where $W_{\max}=\max_{i\in I}\sum_{j\in I} W_{ij}.$
\end{proposition}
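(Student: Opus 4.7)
The plan is to apply Lemma~\ref{lemma:Laplacian-inequality}(iii) with $\admin = 1-q$, which holds almost surely since a node's self-confidence drops only to $1-q$ when it receives a broadcast. Thus it suffices to find $\beta = q(1+W_{\max})$ with $\Exp[L^*\1\1^*L] \leq \beta\,\Exp[L+L^*]$, and then $\gamma=\beta/\admin$ matches the announced value. Verifying the hypothesis $\1^*\Exp[L]=0$ is immediate: conditioning on the broadcaster $J$ (uniform on $I$), one finds $\Exp[L] = \frac{q}{N}L(W)$, and since $W=W^*$ is symmetric, $\1^*L(W)=0$ and moreover $\Exp[L+L^*] = \frac{2q}{N}L(W)$.

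To bound the quadratic form $y^*\Exp[L^*\1\1^*Ly]$ for arbitrary $y\in\real^I$, I parameterize the randomness by the broadcaster $J$ and, conditionally on $J=j$, by independent Bernoulli variables $B_i$ with $\Pr(B_i=1)=W_{ij}$ indicating reception. A direct computation shows that $(Ly)_i = q\, B_i(y_i-y_j)$ for $i\neq j$ and $(Ly)_j=0$, so that conditionally on $J=j$,
\begin{equation*}
\1^*Ly = q \sum_{i:i\neq j} B_i (y_i-y_j) =: qS_j.
\end{equation*}
Using the conditional independence of the $B_i$, I decompose $\Exp[S_j^2] = \Var(S_j) + (\Exp S_j)^2$ and bound each term separately: the variance is $\sum_{i\neq j} W_{ij}(1-W_{ij})(y_i-y_j)^2 \leq \sum_{i\neq j} W_{ij}(y_i-y_j)^2$, while Lemma~\ref{lem:sum_square} applied to the mean yields $(\Exp S_j)^2 \leq \big(\sum_{i\neq j}W_{ij}\big)\sum_{i\neq j}W_{ij}(y_i-y_j)^2 \leq W_{\max}\sum_{i\neq j} W_{ij}(y_i-y_j)^2$.

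Averaging over $j$ with weight $1/N$ and combining gives
\begin{equation*}
y^*\Exp[L^*\1\1^*L]y \leq \frac{q^2(1+W_{\max})}{N} \sum_{i\neq j}W_{ij}(y_i-y_j)^2 = q(1+W_{\max})\, y^*\Exp[L+L^*]y,
\end{equation*}
where the last equality uses the standard identity $y^*L(W)y = \frac{1}{2}\sum_{i\neq j}W_{ij}(y_i-y_j)^2$ together with the formula for $\Exp[L+L^*]$ derived above. This gives the desired $\beta$, and hence $\gamma=(W_{\max}+1)\frac{q}{1-q}$ via Lemma~\ref{lemma:Laplacian-inequality}(iii).

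The main obstacle is the step $\Exp[S_j^2]\leq (1+W_{\max})\sum_{i\neq j}W_{ij}(y_i-y_j)^2$: the PBGA does feature correlated simultaneous updates (all receivers share the same broadcaster), so Theorem~\ref{thm:indep} does not apply directly. The key structural observation that makes the argument work is that \emph{conditionally on $J$} the receptions are independent Bernoullis, which localizes the dependence and lets the variance and mean be controlled separately — and only the squared mean contributes the $W_{\max}$ term, the cost of the residual correlation among simultaneous updates.
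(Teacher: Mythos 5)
Your proof is correct, but it takes a genuinely different route from the paper's. The paper imports the exact second-moment formulas $\Exp[L^*L]=\tfrac{2q^2}{N}L(W)$ and $\Exp[L^*\1\1^*L]=\tfrac{q^2}{N}L(W)^2+\tfrac{2q^2}{N}L(W)-\tfrac{2q^2}{N}L(W\cdot W)$ from the reference on probabilistic broadcast, substitutes them directly into condition~\eqref{eq:condition-gamma} of Theorem~\ref{th:main-condition} (bypassing Lemma~\ref{lemma:Laplacian-inequality}), discards the positive-semidefinite term $2L(W\cdot W)$, and closes with Gershgorin's theorem to get $L(W)^2\le 2W_{\max}L(W)$. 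You instead condition on the broadcaster $J$, exploit the conditional independence of the reception indicators to split $\Exp[S_j^2]$ into variance plus squared mean, bound each by $\sum_{i\neq j}W_{ij}(y_i-y_j)^2$ (the variance giving the ``$+1$'' and the squared mean, via Lemma~\ref{lem:sum_square}, giving the ``$W_{\max}$''), and then route through Lemma~\ref{lemma:Laplacian-inequality}(iii) with $\admin=1-q$. Your argument is more self-contained (no external moment computation, no spectral/Gershgorin step) and more in the spirit of the quadratic-form estimates used elsewhere in the paper; it also makes transparent which part of the bound is the price of the residual correlation among simultaneous receivers. The paper's route, by contrast, keeps the exact moments visible and makes clear exactly which term ($-2L(W\cdot W)$) is being thrown away. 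One caveat common to both: the conditional independence of the receptions given the broadcaster is implicit in the statement of Example~\ref{ex:PBGA} (and in the cited lemma the paper relies on); you use it explicitly when computing $\Var(S_j)$, so it is worth stating as an assumption.
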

\begin{proof}
From~\cite[Lemma~2]{TCA-ADS-AGD:09} we can quickly derive that for every $t\ge0$,
\begin{align*}
&\Exp[L(t)]=\frac{q}N L(W)\\
&\Exp[L(t)^*L(t)]=2 \frac{q^2}N L(W)\\
&\Exp[L(t)^*\1\1^*L(t)]=\frac{q^2}N L(W)^2 + 2 \frac{q^2}N L(W) - 2\frac{q^2}N L(W\cdot W),
\end{align*}
where $W\cdot W$ denotes entrywise product. 
The assumption on $W$ implies that $\1^*\Exp[L(t)]=0$, and in order to apply Theorem~\ref{th:main-condition}  we have to find $\gamma$ which satisfies the inequality
\begin{equation*}\frac{q^2}N L(W)^2+ 2 \frac{q^2}N L(W) -2\frac{q^2}N L(W\cdot W)\le \gamma \left(2\frac{q}{N} L(W) - 2\frac{q^2}N L(W)\right),\end{equation*}
that is $$ L(W)^2-2 L(W\cdot W)\le 2\prt{\gamma \frac{1-q}{q}-1} L(W).$$ Since any Laplacian --and in particular $L(W\cdot W)$-- is positive semidefinite, a sufficient condition for the previous inequality to hold is 
$$ L(W)^2\le 2\prt{\gamma \frac{1-q}{q}-1}  L(W).$$
\startmodif Gershgorin's disk Lemma implies that the spectral radius of $L(W)$ is not larger than $2 W_{\max}$, and the result follows.
\end{proof}
%


\section{Conclusion and perspectives}
We have developed a new way of evaluating the mean square error of decentralized consensus protocols that preserve the  \startmodif expected average\stopmodif. Our results ensure that, under mild conditions, distributed averaging can be performed via asymmetric and asynchronous algorithms, with a loss in the quality of the estimate which vanishes when the number of samples (and nodes) \startmodif is increased. \stopmodif This fact strongly supports the application of \startmodif these \stopmodif algorithms to large networks. Our analysis \startmodif complements the results about the speed of convergence, which has been \stopmodif thoroughly studied in the literature and was not reconsidered in this paper.
Regarding design issues, we indeed note that optimizing an algorithm for accuracy may entail a slower convergence rate: for instance, it is intuitive that in the AAGA, larger values of $q$ imply faster convergence but poorer accuracy. Thanks to our results, the speed/accuracy trade-off can be more precisely studied in a wide range of examples.

Unlike certain previous approaches, which relied on the convergence speed of these systems, our results are based on the fact that the increase of the error can be bounded proportionally to the decrease of the disagreement. 
As such, they are independent of the speed at which the system converges, and therefore of the spectral properties of the network, which determines this speed. 
Notably, our bounds only involve local quantities such as the degree of the nodes or the weight that they give to their neighbors' values, as opposed to global ones such as the eigenvalues of the network Laplacian. 
As local quantities are much easier to control in distributed systems, our results are of immediate application in design.

  Our method has been applied to several known protocols: although we have sometimes been very conservative when deriving our bounds, we have obtained bounds that either match or improve upon those available in the literature. In addition, results from algorithmic simulations are closely matched by our bounds, which appear to accurately capture the qualitative dependence on  the network size. Note that we have limited the number of applications of our results presented here, in the interest of concision and simplicity: some additional applications can be found in~\cite{PF-JMH:13-ecc}. 

Overall, two classes of systems were proved to be {\em accurate}: those with sufficiently few or small simultaneous updates, and those with sufficiently uncorrelated simultaneous updates. These two apparently   unrelated situations in reality present strong similarities, because the updates taking place at different times are assumed to be uncorrelated. \stopmodif This suggests that the real parameter,   which determines \stopmodif the mean square error, is the level of correlation between the updates taking place across the history of the system. Further work could be devoted   to formalize and quantify this intuition on the importance of the correlations between the updates.
Finally, we note that --to the best of our knowledge-- the distribution of the final values for processes which do not preserve the expected average has not been studied yet.   \stopmodif

\bibliographystyle{plain}

\end{document}